\providecommand{\keywords}[1]
{
  \small	
  \textbf{\textit{Keywords:}} #1
}
\newcommand{\MSC}[1]{%
  \small
  \textbf{\textit{Mathematics Subject Classification:}} #1
}
\title{Global boundedness in the higher-dimensional fully parabolic chemotaxis with weak singular sensitivity and logistic source   }
\author{
    Minh Le\thanks{The Institute of Theoretical Sciences, Westlake University, China \texttt{(leminh@westlake.edu.cn)}} }
\date{}
\begin{document}
\maketitle

\begin{abstract}
We consider the following chemotaxis system under homogeneous Neumann boundary conditions in a smooth, open, bounded domain $\Omega \subset \mathbb{R}^n$ with $n \geq 3$:  
\begin{equation*} 
      \begin{cases}
          u_t = \Delta u - \chi \nabla \cdot \left( \frac{u}{v^k} \nabla v \right) + ru - \mu u^2,  & \text{in } \Omega \times (0,T_{\rm max}), \\
         v_t =  \Delta v - \alpha v + \beta u, & \text{in } \Omega \times (0,T_{\rm max}),  
      \end{cases}
\end{equation*}  
where $k \in (0,1)$, and $\chi, r, \mu, \alpha, \beta$ are positive parameters. In this paper, we demonstrate that for suitably smooth initial data, the problem admits a unique nonnegative classical solution that remains globally bounded in time when $\mu$ is sufficiently large.

\end{abstract}
\keywords{Chemotaxis, logistic sources,  global boundedness, weak singular sensitivity}\\
\MSC{35B35, 35K45, 35K55, 92C15, 92C17}

\numberwithin{equation}{section}
\newtheorem{theorem}{Theorem}[section]
\newtheorem{lemma}[theorem]{Lemma}
\newtheorem{remark}{Remark}[section]
\newtheorem{Prop}{Proposition}[section]
\newtheorem{Def}{Definition}[section]
\newtheorem{Corollary}{Corollary}[theorem]
\allowdisplaybreaks

\section{Introduction}
We are concerned with the following system of partial differential equations in a smooth open bounded domain $\Omega \subset \mathbb{R}^n$ with $n \geq 3$:
\begin{equation}  \label{1}
      \begin{cases}
          u_t = \Delta u - \chi \nabla \cdot \left( \frac{u}{v^k} \nabla v \right) + ru - \mu u^2 ,  \qquad &\text{in } \Omega \times (0,T_{\rm max}), \\
         v_t =  \Delta v - \alpha v + \beta u, \qquad &\text{in } \Omega \times (0,T_{\rm max}),  
      \end{cases}
\end{equation}  
where  $\chi>0$, $r >0$, $\mu>0$, $\alpha>0$, $\beta>0$, $k>0$, $T_{\rm max} \in (0, \infty]$ is the maximal existence time, and 
\begin{equation} \label{initial}
    \begin{cases}
        u_0 &\in C^{0} (\bar{\Omega}) \text{ is nonnegative with } \int_\Omega u_0 >0  \\
      v_0 &\in W^{1, \infty}(\Omega), \quad v_0> 0 \quad \text{in }\Omega.
    \end{cases}
\end{equation}
The system \eqref{1} is endowed with the homogeneous Neumann boundary conditions:
\begin{align} \label{bdry}
    \frac{\partial u}{\partial \nu}= 0, \qquad \frac{\partial v}{\partial \nu}=0, \qquad \text{on } \partial \Omega \times (0,T_{\rm max}).
\end{align}
Cells or microorganisms can secrete chemical signals and direct their movement in response to chemical stimuli. This phenomenon, referred to as chemotaxis, can be modeled using the system \eqref{1}. Here, $u(x,t)$ and $v(x,t)$ represent the cell density and chemical concentration at position $x$ and time $t$, respectively. The simplest version, where $k = r = \mu = 0$, is known as the Keller-Segel model, which was introduced in \cite{Keller2}. This model has been studied extensively from various perspectives, not only because of its practical applications but also due to its interesting mathematical properties. One well-known property is the critical mass phenomenon in two dimensions. This means that solutions exist globally and remain bounded in time when the initial population is below a certain threshold \cite{Dolbeault, Dolbeault1, NSY}, while solutions blow up if it exceeds that threshold \cite{Nagai1, Nagai2, Nagai3, Nagai4}. However, this phenomenon no longer holds in higher dimensions. In fact, it was proven in \cite{Winkler-2010} that there exist infinitely many initial conditions with arbitrarily small mass that lead to blow-up. In an attempt to prevent blow-up, a logistic source has been introduced and studied in \cite{Tello+Winkler} for the parabolic-elliptic case and in \cite{Winkler-logistic} for the fully parabolic case in any dimensional setting. These results assert that a sufficiently large $\mu$ can prevent the occurrence of both finite- and infinite-time blow-up.

The signal-dependent sensitivity $\frac{\chi}{v}$, arising from the Weber-Fechner law of stimulus perception, was first considered in the pioneering works \cite{Keller, Keller+Segel} in the context of chemotactic response. In \cite{Winkler+2011}, it was shown that solutions exist globally in time under the condition $\chi < \sqrt{\frac{2}{n}}$ for $n \geq 2$, though the question of whether these solutions remain globally bounded was left open. This was later resolved in \cite{Fujie}, where it was established that solutions are indeed globally bounded under the same constraint on $\chi$. This result naturally led to the question of whether $\sqrt{\frac{2}{n}}$ represents the optimal bound for $\chi$. A partial answer was provided in \cite{Lankeit_2016}, where it was demonstrated that for $n=2$, blow-up can still be prevented under the relaxed condition $\chi < 1.015$.

The interplay between singular sensitivity and the logistic source term, $ru - \mu u^2$, was first investigated in \cite{MKTAM}, where the authors established the global existence of solutions for fully parabolic case in two dimensions. A similar problem was studied in \cite{FWY} for the parabolic-elliptic case in dimension two, where the authors proved that solutions exist globally for any positive parameters $r$ and $\mu$. Furthermore, they demonstrated that solutions remain bounded under the conditions $r > \frac{\chi^2}{4}$ when $\chi \in (0,2)$ and $r > \chi - 1$ when $\chi \geq 2$. The same boundedness result was later obtained in \cite{Zhao+Zheng} for the fully parabolic case in two dimensions under the same restrictions on $r$ and $\chi$.  

For the weak singular sensitivity($k<1$) chemotaxis with logistic source, the parabolic-elliptic model was investigated in \cite{zh}, where the author established the global boundedness of solutions in two dimensions, provided that $\mu$ is sufficiently large. This largeness assumption was subsequently removed in \cite{Minh6}, where it was further shown that the sub-logistic damping term, $-\frac{\mu u^2}{\ln^\gamma(u+e)}$ with $\gamma \in (0,1)$, ensures global boundedness. Higher-dimensional cases were studied in \cite{Kurt}, where the author demonstrated that solutions exist globally for $k \in (0,1)$ and remain bounded for $k \in \left ( 0, \frac{1}{2}+\frac{2}{n} \right )$ with $n \geq 3$, provided that $\mu$ is sufficiently large. Very recently, the fully parabolic model in two dimensions was recently investigated in \cite{minh7}, where the author proved that blow-up is prevented by the quadratic logistic damping term, $-\mu u^2$, for any $\mu > 0$. The main objective of this paper is to extend this research to arbitrary dimensions. Specifically, we demonstrate that a sufficiently large $\mu$ is indeed sufficient to prevent blow-up in any dimension. Our main result is stated as follows:  

\begin{theorem}\label{thm1}
 Let $\Omega \subset \mathbb{R}^n$ with $n\geq 3$ be a smooth bounded domain and $k \in (0,1)$. Assume that \eqref{initial} holds then there exists $\mu_0=\mu_0(n,k,r,\chi, \beta)>0$ such that for any $\mu>\mu_0$, the system \eqref{1} under the boundary condition \eqref{bdry} possesses a unique classical solution $(u,v)$  such that  
\[
u, v \in C^0 \left(\bar{\Omega} \times [0,\infty)\right) \cap C^{2,1} \left(\bar{\Omega} \times (0,\infty)\right).
\]  
Furthermore, \( u \) and \( v \) are strictly positive and  uniformly bounded in \( \Omega \times (0, \infty) \) in the sense that 
\begin{align*}
     \sup_{t> 0} \left \{ \left \| u(\cdot,t) \right \|_{L^\infty(\Omega)} + \left \| v(\cdot,t) \right \|_{W^{1, \infty}(\Omega)} \right \} <\infty.
\end{align*}
\end{theorem}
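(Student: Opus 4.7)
My plan is to adapt the strategy from \cite{minh7}, which handled $n=2$, to arbitrary $n\geq 3$, with the hypothesis that $\mu$ be sufficiently large serving to absorb the super-linear contributions arising from the chemotactic convection that dissipation alone cannot control in higher dimensions. To set up, a standard contraction-mapping argument yields a unique nonnegative classical solution $(u,v)$ on a maximal interval $[0,T_{\max})$ with $v>0$ on $\Omega\times(0,T_{\max})$ by the strong maximum principle. Integration of the first equation combined with Cauchy--Schwarz gives the uniform mass bound $\int_\Omega u(\cdot,t)\leq \max\{\int_\Omega u_0,\,r|\Omega|/\mu\}$, while the comparison $v_t\geq\Delta v-\alpha v$ yields a time-local positive lower bound $v(\cdot,t)\geq c(T)>0$ on $[0,T]$ for every $T<T_{\max}$, which renders the singular coefficient $1/v^k$ locally harmless.

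The core of the proof is an $L^p$ energy estimate obtained by testing the $u$-equation against $u^{p-1}$; integration by parts produces
\[
\frac{1}{p}\frac{d}{dt}\int_\Omega u^p + \frac{4(p-1)}{p^2}\int_\Omega |\nabla u^{p/2}|^2 = \chi(p-1)\int_\Omega \frac{u^{p-1}\nabla u\cdot\nabla v}{v^k} + r\int_\Omega u^p - \mu\int_\Omega u^{p+1}.
\]
A first Young inequality splits the cross term, absorbing half the diffusive dissipation and leaving a residue $C_1\int u^p|\nabla v|^2/v^{2k}$; a second Young inequality dominates this residue by $\tfrac{\mu}{2}\int u^{p+1} + C_2 \int|\nabla v|^{2(p+1)}/v^{2k(p+1)}$, and the first piece is absorbed by the logistic sink provided $\mu$ exceeds a threshold $\mu_0(n,k,r,\chi,\beta,p)$. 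The assumption $k<1$ enters decisively at this step because the negative power $2k(p+1)$ of $v$ is strictly smaller than the gradient exponent $2(p+1)$, keeping the weighted residual integral within reach.

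Writing $g = v^{1-k}/(1-k)$ so that the residual integral equals $\int|\nabla g|^{2(p+1)}$, I would close the estimate using the Duhamel representation $v(\cdot,t)=e^{t(\Delta-\alpha)}v_0 + \beta\int_0^t e^{(t-s)(\Delta-\alpha)} u(\cdot,s)\,ds$ together with standard $L^q$-$L^s$ smoothing bounds for the Neumann heat semigroup and the lower bound on $v$, obtaining $\int|\nabla g|^{2(p+1)}\leq C(1+\sup_{s\leq t}\|u(\cdot,s)\|_{L^q}^{2(p+1)})$ for an appropriate $q<p+1$. Starting from the $L^1$ mass bound and iterating this $L^p$-to-$L^{p'}$ mechanism, $p$ can be pushed past $n$ in finitely many steps; a Moser--Alikakos iteration on the $u$-equation then provides the $L^\infty$ bound on $u$, and parabolic regularity applied to the $v$-equation upgrades this to the $W^{1,\infty}$ bound on $v$, ruling out finite-time blow-up via the extensibility criterion.

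I expect the principal obstacle to be promoting these bounds from time-local to uniform in $t$, which reduces to securing a time-independent positive pointwise lower bound on $v$. The plan is to extract such a bound from the Duhamel formula using uniform Neumann heat-kernel lower bounds, once one shows that $\|u(\cdot,t)\|_{L^1}$ remains bounded below by a positive constant for all sufficiently large $t$; this last claim would be established from the logistic-type ODE governing $\int_\Omega u$ in combination with the uniform $L^\infty$ bound on $u$ obtained in the previous paragraph, a step that partially determines the required size of $\mu_0$. Once the uniform lower bound on $v$ is in hand, every Young-type estimate in the $L^p$ argument produces time-independent constants, and the bootstrap then yields the claimed uniform bound $\sup_{t>0}(\|u(\cdot,t)\|_{L^\infty}+\|v(\cdot,t)\|_{W^{1,\infty}})<\infty$ of Theorem~\ref{thm1}.
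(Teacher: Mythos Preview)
Your approach diverges from the paper's in a decisive way, and the divergence leaves a genuine gap. The paper never establishes a time-uniform positive lower bound on $v$; instead it couples $\int_\Omega u^p$ with the mixed functional $\int_\Omega u^p v^{-q}$ (for $1\le q<p-1$) and $\int_\Omega v^{p+1}$. Differentiating $\int_\Omega u^p v^{-q}$ through the $v$-equation produces two favorable terms, $-q(q+1)\int_\Omega u^p v^{-q-2}|\nabla v|^2$ and $-q\beta\int_\Omega u^{p+1}v^{-q-1}$, which absorb all cross-diffusion contributions carrying negative powers of $v$ without any appeal to a lower bound on $v$. What survives is a term of the form $\int_\Omega u^p v^{-1}|\nabla v|^2$; this is handled by Young's inequality together with the key estimate $\int_\Omega |\nabla v|^{2(p+1)}/v^{p+1}\le C\int_\Omega |\Delta v|^{p+1}+C\int_\Omega v^{p+1}$ (Lemma~\ref{LK-1}), and the Laplacian integral is in turn controlled by maximal parabolic regularity (Lemma~\ref{l1}) in terms of $\int_{t_0}^t e^{(p+1)s/2}\int_\Omega u^{p+1}$. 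The logistic sink then absorbs everything once $\mu$ is large, giving directly a time-uniform bound on $\int_\Omega u^p$.

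Your route instead bounds the residual $\int_\Omega |\nabla v|^{2(p+1)}/v^{2k(p+1)}$ by combining semigroup gradient estimates for $v$ with a pointwise lower bound on $v$. Since the only a priori lower bound is $v\ge e^{-\alpha t}\inf v_0$, every constant you produce is time-dependent; this may rule out finite-time blow-up but does not yield uniform boundedness. Your plan to upgrade to a time-uniform lower bound on $v$ is circular: you invoke ``the uniform $L^\infty$ bound on $u$ obtained in the previous paragraph'' to control $\int_\Omega u^2$ in the mass ODE and thereby keep $\int_\Omega u$ bounded below, but that $L^\infty$ bound was itself derived from the exponentially decaying lower bound on $v$ and hence carries constants that grow with $t$. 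Without independent uniform control on $\|u\|_{L^\infty}$ or $\int_\Omega u^2$, the relation $\frac{d}{dt}\int_\Omega u=r\int_\Omega u-\mu\int_\Omega u^2$ does not prevent $\int_\Omega u\to 0$, and the Duhamel lower bound on $v$ stays out of reach. The paper's mixed-functional device, together with Lemma~\ref{LK-1}, is precisely what breaks this circularity.
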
    
\begin{remark}
    The result is consistent with \cite{Kurt}, however, a precise quantitative estimate for $\mu_0$ is left to be open.
\end{remark}
The main difficulty in dealing with system \eqref{1} is that the logistic source disrupts the mass conservation of the solutions, which leads to the possibility that \( v \) can approach zero. This, in turn, causes the cross-diffusion term to become arbitrarily large, preventing it from being absorbed into the diffusion term. To overcome this obstacle, we introduce the energy function 
\[
y(t) = \int_\Omega u^p + \int_\Omega u^p v^{-q} + \int_\Omega v^{p+1},
\]
with \( p > 0 \) and \( q > 0 \), satisfying appropriate conditions. This energy functional, together with two key estimates—Lemma \ref{l1} and Lemma \ref{LK-1}—enables us to establish \( L^p \)-boundedness for the solutions, which further leads to their uniform boundedness. \\

The paper is organized as follows. In Section \ref{s2}, we establish the local existence of solutions and introduce several useful estimates that will be applied in the subsequent sections. In Section \ref{s3}, which is the core of our analysis, we derive key a priori estimates for the solutions. Finally, in Section \ref{s4}, we provide a brief proof of the main result.

\section{Preliminaries} \label{s2}
We begin this section by recalling a local existence result for system \eqref{1}, stated in the following lemma.
\begin{lemma} \label{local-exist}
    Let $\Omega \subset \mathbb{R}^n$ with $n \geq 3$ be a smooth, bounded domain, and let $\chi, k, r, \mu, \alpha, \beta$ be positive parameters. Assume that the initial condition \eqref{initial} holds. Then, there exists a maximal existence time \( T_{\rm max} \in (0, \infty] \) and a unique pair of functions \( u \) and \( v \) satisfying  
    \[
    u, v \in C^0 \left(\bar{\Omega} \times [0,T_{\rm max})\right) \cap C^{2,1} \left(\bar{\Omega} \times (0,T_{\rm max})\right),
    \]  
    which solve the system \eqref{1} with the boundary conditions \eqref{bdry} in the classical sense. Moreover, \( u > 0 \) and \( v \geq e^{-t} \inf_{x \in \Omega} v_0(x) \) in \( \bar{\Omega} \times (0,T_{\rm max}) \). If \( T_{\rm max} < \infty \), then  
    \begin{align} \label{local-exist-1}
        \limsup_{t \to T_{\rm max}} \left\{ \| u(\cdot,t) \|_{L^\infty(\Omega)} + \| v(\cdot,t) \|_{W^{1, \infty}(\Omega)} \right\} = \infty.
    \end{align}  
\end{lemma}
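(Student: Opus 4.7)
The plan is to apply a standard contraction mapping argument tailored to the singular chemotactic drift, exploiting that $v_0 \in W^{1,\infty}(\Omega)$ with $v_0 > 0$ on $\bar{\Omega}$ forces $\delta_0 := \min_{\bar{\Omega}} v_0 > 0$. On a short interval $[0,T]$ I would work in the closed convex set
\[
X_T = \{ (\tilde u, \tilde v) \in C^0(\bar{\Omega} \times [0,T])^2 : \|\tilde u\|_\infty + \|\tilde v\|_\infty \leq M, \ \tilde v \geq \delta_0/2 \}
\]
for a suitable $M > 0$, and define an iteration map $\Phi(\tilde u, \tilde v) = (u,v)$ where $v$ solves the linear heat-type equation $v_t = \Delta v - \alpha v + \beta \tilde u$ with homogeneous Neumann data and initial datum $v_0$, and $u$ solves the linear drift-diffusion equation $u_t = \Delta u - \chi \nabla \cdot (\tilde u \, \tilde v^{-k} \nabla \tilde v) + r\tilde u - \mu \tilde u^2$. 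Because $\tilde v^{-k}$ is uniformly bounded on $X_T$, standard heat-semigroup smoothing and parabolic $L^p$/Schauder estimates yield $\Phi(X_T) \subset X_T$ and contractivity for $T$ small enough; Banach's fixed point theorem then produces a unique solution, and a classical parabolic bootstrap upgrades regularity to $C^{2,1}(\bar{\Omega} \times (0,T))$.

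Pointwise positivity is obtained by comparison. For $v$, since $u \geq 0$ the equation $v_t - \Delta v + \alpha v = \beta u \geq 0$ gives the lower bound $v \geq e^{-\alpha t}\min_{\bar{\Omega}} v_0$ via the standard supersolution argument, which in particular entails the stated $e^{-t}$-type bound. For $u$, once the drift $\chi v^{-k}\nabla v$ is regarded as a given bounded and continuous coefficient on any compact subinterval of $(0,T_{\max})$, the $u$-equation is linear parabolic with bounded coefficients, and the strong maximum principle together with $u_0 \geq 0$, $u_0 \not\equiv 0$ (ensured by $\int_\Omega u_0 > 0$) yields $u > 0$ throughout $\bar{\Omega} \times (0,T_{\max})$.

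For the extensibility criterion \eqref{local-exist-1}, I would argue by contradiction: if $T_{\max} < \infty$ while $\|u(\cdot,t)\|_{L^\infty} + \|v(\cdot,t)\|_{W^{1,\infty}}$ stayed bounded as $t \to T_{\max}^-$, then, because the comparison bound keeps $v$ uniformly away from zero on $[0,T_{\max}]$, one could choose $t_0 < T_{\max}$ close enough to $T_{\max}$ and apply the local construction again with $(u(\cdot,t_0), v(\cdot,t_0))$ as new initial data to extend the solution past $T_{\max}$, contradicting maximality.

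The main technical obstacle is precisely the singular sensitivity $v^{-k}$: every step — contraction, bootstrap, and restart — requires a uniform positive lower bound on $v$. Such a bound is only automatic on bounded time intervals via the exponential comparison estimate, and it is exactly this mechanism that makes the hypothesis $v_0 > 0$ on $\bar{\Omega}$ indispensable; losing the lower bound on $v$ would break local Lipschitz dependence of the drift and invalidate the fixed-point estimate.
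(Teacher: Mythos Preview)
Your sketch is correct and matches the paper's approach: the paper gives no detailed proof but simply cites a standard Banach fixed-point argument (referring to \cite{Zhao+Zheng}, Lemma~2.2), and your outline---contraction on a set where $\tilde v$ is bounded below, parabolic bootstrap, comparison-principle positivity, and extensibility by contradiction---is precisely that standard route. One minor remark: the comparison argument naturally yields $v \geq e^{-\alpha t}\min_{\bar\Omega} v_0$, so the stated bound $v \geq e^{-t}\inf_{\bar\Omega} v_0$ in the lemma is presumably a typo for $e^{-\alpha t}$ (or implicitly assumes $\alpha \leq 1$), not a defect in your reasoning.
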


\begin{proof}
The proof relies on a standard fixed-point argument in Banach spaces. For a detailed proof, we refer the reader to \cite{Zhao+Zheng}[Lemma 2.2].
\end{proof}
From now on, we refer to \( (u,v) \) as the unique solution of system \eqref{1} in \( \Omega \times (0, T_{\rm max}) \), where \( T_{\rm max} \in (0, \infty] \), as established in the previous lemma. Next, we recall a standard $L^p$ regularity result for parabolic equations as established in \cite{DM}[Lemma 4.1].
\begin{lemma} \label{C52.Para-Reg}
Let $\Omega \subset \mathbb{R}^n$ with $n \geq 2$ be an open bounded domain with smooth boundary. Assume that $p\geq 1$ and $q \geq 1$ satisfy 
\begin{equation*}
    \begin{cases}
     q &< \frac{np}{n-p},  \qquad \text{when } p<n,\\
     q &< \infty, \qquad \text{when } p=n,\\
      q &= \infty, \qquad \text{when } p>n.\\
     \end{cases}
\end{equation*}
Assuming $V_0 \in W^{1,q}(\Omega)$ and $V$ is a classical solution to the following system
\begin{equation}\label{C52.parabolic-equation}
    \begin{cases}
     V_t = \Delta V  - a V + f &\text{in } \Omega \times (0,T), \\ 
\frac{\partial V}{\partial \nu} =  0 & \text{on }\partial \Omega \times (0,T),\\ 
 V(\cdot,0)=V_0   & \text{in } \Omega
    \end{cases}
\end{equation}
where $a>0$ and $T\in (0,\infty]$. If $f \in L^\infty \left ( (0,T);L^p(\Omega) \right ) $, then $V  \in L^\infty \left ( (0,T);W^{1,q}(\Omega) \right )$.
\end{lemma}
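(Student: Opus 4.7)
The natural strategy is to express the solution via the variation of constants formula for the Neumann realization of $\Delta - a$ and to invoke standard smoothing estimates for the associated heat semigroup $(e^{t\Delta})_{t\ge 0}$. The plan is to write
\[
V(t) = e^{-at}\,e^{t\Delta}V_0 + \int_0^t e^{-a(t-s)}\,e^{(t-s)\Delta} f(s)\, ds
\]
and then to estimate $\|\nabla V(t)\|_{L^q(\Omega)}$ (together with the easier quantity $\|V(t)\|_{L^q(\Omega)}$) uniformly in $t \in (0,T)$.

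For the homogeneous part, I would use that the Neumann semigroup $e^{t\Delta}$ is uniformly bounded on $W^{1,q}(\Omega)$; combined with the decay factor coming from $a>0$, this yields
\[
\|e^{-at}e^{t\Delta}V_0\|_{W^{1,q}(\Omega)} \;\leq\; C\,e^{-at}\,\|V_0\|_{W^{1,q}(\Omega)}, \qquad t>0,
\]
which is uniformly controlled by $\|V_0\|_{W^{1,q}(\Omega)}$. For the inhomogeneous part, the core ingredient is the well-known $L^p$--$L^q$ gradient smoothing estimate for the Neumann heat semigroup (see e.g.\ the heat semigroup lemma in Winkler's works),
\[
\|\nabla e^{t\Delta} w\|_{L^q(\Omega)} \;\leq\; C\!\left(1 + t^{-\tfrac{1}{2} - \tfrac{n}{2}\left(\tfrac{1}{p}-\tfrac{1}{q}\right)}\right)\|w\|_{L^p(\Omega)}, \qquad t > 0,
\]
valid for the admissible pairs $(p,q)$ in the hypothesis. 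Combining this with the exponential factor from $-aV$ and substituting $\tau = t-s$ gives
\[
\|\nabla V(t)\|_{L^q(\Omega)} \;\leq\; Ce^{-at}\|V_0\|_{W^{1,q}(\Omega)} + C\|f\|_{L^\infty((0,T);L^p(\Omega))}\!\int_0^t\!\left(1 + \tau^{-\tfrac{1}{2} - \tfrac{n}{2}\left(\tfrac{1}{p}-\tfrac{1}{q}\right)}\right) e^{-a\tau}\, d\tau.
\]

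The decisive step, and really the only obstacle, is to verify that the time integral above is bounded uniformly in $t\in(0,T)$, including the case $T=\infty$. This forces the singularity at $\tau=0$ to be integrable, i.e.\ $\tfrac{1}{2} + \tfrac{n}{2}\bigl(\tfrac{1}{p}-\tfrac{1}{q}\bigr) < 1$, which rearranges to $\tfrac{1}{p}-\tfrac{1}{q} < \tfrac{1}{n}$ and is exactly the hypothesis $q<\tfrac{np}{n-p}$ when $p<n$; the endpoint cases $p=n$ with $q<\infty$ and $p>n$ with $q=\infty$ likewise give $\tfrac{1}{2} + \tfrac{n}{2p} < 1$, so the integral remains finite. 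The factor $e^{-a\tau}$ with $a>0$ guarantees convergence at infinity as well, so the integral is bounded by a constant independent of $t$. An analogous (and strictly easier) argument based on the $L^p$--$L^q$ estimate $\|e^{t\Delta}w\|_{L^q(\Omega)} \leq C(1 + t^{-\tfrac{n}{2}(\tfrac{1}{p}-\tfrac{1}{q})})\|w\|_{L^p(\Omega)}$ yields the uniform bound on $\|V(t)\|_{L^q(\Omega)}$ itself, and together these give $V\in L^\infty((0,T);W^{1,q}(\Omega))$ as claimed.
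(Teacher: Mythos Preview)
Your argument is correct and is precisely the standard approach; the paper does not actually prove this lemma but merely cites it as Lemma~4.1 in Horstmann--Winkler \cite{DM}, whose proof proceeds exactly along the lines you describe (Duhamel representation plus the $L^p$--$L^q$ gradient smoothing estimate for the Neumann heat semigroup, with the integrability of $\tau^{-\frac{1}{2}-\frac{n}{2}(\frac{1}{p}-\frac{1}{q})}e^{-a\tau}$ on $(0,\infty)$ enforcing the condition on $q$).
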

The following lemma serves a key inequality for comparing \( u \) and \( \Delta v \). For a detailed proof, we refer interested readers to \cite{WMS}[Lemma 2.3].
\begin{lemma} \label{l1}
    Assuming that $\Omega \subset \mathbb{R}^n$ with $n \geq 1$,  $ p \in (n, \infty)$, and $T\in (0,\infty]$. Then there exists $C= C(n,p,\Omega)>0$ such that for any $t \in (t_0, T)$ with $t_0:= \max \left \{ 1, \frac{T}{2}\right \}$, the following holds
\begin{align}
    \int_{t_0}^t  e^{\frac{ps}{2}} \int_\Omega |\Delta g|^p\,dx  \, ds \leq C \left ( \int_{t_0}^t e^{\frac{ps}{2}} \int_\Omega |f|^p \,dx \, ds + e^{\frac{pt_0}{2}} \left \| \Delta g(\cdot, t_0) \right \|^p_{L^p(\Omega)}\right ),
\end{align}
for any $f \in L^p \left (  \Omega \times (0,T) \right ) $ and $g$ is a classical solution to the following system with initial condition $g_0 \in W^{2, p}(\Omega)$:
\begin{equation}
    \begin{cases}
     g_t = \Delta g  -  g + f &\text{in } \Omega \times (0,T), \\ 
\frac{\partial g}{\partial \nu} =  0 & \text{on }\partial \Omega \times (0,T),\\ 
 g(\cdot,0)=g_0   & \text{in } \Omega.
    \end{cases}
\end{equation}
\end{lemma}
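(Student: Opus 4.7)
The plan is to reduce the weighted inequality to a standard (unweighted) maximal $L^p$-regularity statement for a dissipative heat equation by a change of the unknown. First I would set $h(x,s) := e^{s/2} g(x,s)$ and compute directly
\begin{align*}
h_s &= \tfrac{1}{2} e^{s/2} g + e^{s/2}(\Delta g - g + f) \\
    &= \Delta h - \tfrac{1}{2} h + e^{s/2} f,
\end{align*}
so that $h$ solves $h_s - \Delta h + \tfrac{1}{2} h = F$ on $\Omega \times (t_0,T)$ with $F := e^{s/2} f$, homogeneous Neumann boundary conditions, and initial datum $h(\cdot, t_0) = e^{t_0/2} g(\cdot, t_0)$. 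Because $\Delta h = e^{s/2} \Delta g$, the weighted inequality claimed in the lemma is equivalent to the unweighted estimate
\begin{align*}
\int_{t_0}^{t} \!\!\int_\Omega |\Delta h|^p\,dx\,ds \leq C \left( \int_{t_0}^{t} \!\!\int_\Omega |F|^p \,dx\,ds + \left\| \Delta h(\cdot, t_0) \right\|_{L^p(\Omega)}^p \right)
\end{align*}
for the equation satisfied by $h$.

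Next I would appeal to the classical maximal $L^p$-regularity of the Neumann Laplacian on smooth bounded domains (in the form of Amann or Hieber--Pr\"uss): the operator $-\Delta + \tfrac{1}{2}$ on $L^p(\Omega)$ with homogeneous Neumann boundary conditions is sectorial, generates a bounded analytic semigroup, and enjoys maximal $L^p$-regularity on any finite time interval for every $p \in (1,\infty)$. Applied to the Cauchy problem solved by $h$ on $(t_0,T)$, this controls $\| \Delta h \|_{L^p((t_0,t) \times \Omega)}^p$ by a multiple of $\| F \|_{L^p((t_0,t) \times \Omega)}^p$ plus a contribution from the initial data measured in the trace space $W^{2 - 2/p, p}(\Omega)$. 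Combining this with elliptic Neumann regularity, $\| h(\cdot, t_0) \|_{W^{2,p}(\Omega)} \leq C ( \| \Delta h(\cdot, t_0) \|_{L^p(\Omega)} + \| h(\cdot, t_0) \|_{L^p(\Omega)} )$, and absorbing the harmless lower-order piece (using the dissipative $+\tfrac12 h$ term, or a Poincar\'e-type comparison on the orthogonal complement of constants), I would further bound this trace norm by $\| \Delta h(\cdot, t_0) \|_{L^p(\Omega)}$ alone.

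The main obstacle is this last step---isolating $\| \Delta g(\cdot,t_0) \|_{L^p}^p$ as the sole initial-data quantity rather than the full $W^{2-2/p,p}(\Omega)$-trace norm produced by the abstract theory. The hypothesis $p > n$ enters precisely here, since the embedding $W^{2,p}(\Omega) \hookrightarrow C^{1}(\bar{\Omega})$ it provides permits a clean comparison of norms and ensures $\Delta g(\cdot, t_0)$ is a bona fide $L^p$ object for solutions produced by Lemma \ref{local-exist}; the role of choosing $t_0 := \max\{1, T/2\}$ is to stay away from $s = 0$ so that parabolic smoothing furnishes this regularity regardless of whether $T$ is large or small. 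Finally, translating back via $|\Delta h|^p = e^{ps/2}|\Delta g|^p$, $|F|^p = e^{ps/2}|f|^p$, and $\| \Delta h(\cdot, t_0) \|_{L^p(\Omega)}^p = e^{p t_0 / 2} \| \Delta g(\cdot, t_0) \|_{L^p(\Omega)}^p$ yields exactly the weighted bound stated in Lemma \ref{l1}.
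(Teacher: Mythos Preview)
The paper does not prove this lemma itself; it simply cites \cite{WMS}[Lemma 2.3] and moves on, so there is no in-paper argument to compare against. Your route---the substitution $h=e^{s/2}g$ to strip off the exponential weight, followed by maximal $L^p$-regularity for the dissipative Neumann problem $h_s-\Delta h+\tfrac12 h=F$, together with the mean/mean-zero splitting so that on the zero-average component the full $W^{2,p}$ norm is controlled by $\|\Delta h(\cdot,t_0)\|_{L^p}$ via elliptic Neumann regularity and Poincar\'e---is the standard and correct way to obtain an estimate of this form, and is in all likelihood what the cited reference does as well.

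One small quibble: your explanation of where the hypothesis $p>n$ enters is not persuasive. The maximal-regularity step and the elliptic/Poincar\'e comparison you invoke are valid for every $p\in(1,\infty)$; neither the Sobolev embedding $W^{2,p}\hookrightarrow C^1(\bar\Omega)$ nor the particular choice of $t_0$ is actually needed to close the inequality as stated. The restriction $p>n$ is more plausibly an artifact of the hypotheses in \cite{WMS} (or a convenience for the downstream application) than a structural ingredient of the proof. Note also that $t_0=\max\{1,T/2\}$ is almost certainly a typo for $\min\{1,T/2\}$---compare how it is used in the proof of Lemma~\ref{Lp}---which is consistent with your own reading of its role.
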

The next lemma establishes a key inequality that will be used in the proof of Lemma \ref{Lp}. Its proof follows directly from Proposition 1.3 in \cite{Kurt+Shen} and Proposition 3.1 in \cite{Kurt} for \( p \geq 3 \). Here, we provide an alternative proof and extend part of their results to any \( p > 1 \).

 \begin{lemma} \label{LK-1}
    Let $\Omega \subset \mathbb{R}^n$ with $n \geq 2$ be an open bounded domain with smooth boundary. For any $p >1$, there exist positive constants $C_1=C_1(p)$ and $C_2=C_2(p)$ such that the following inequality holds
    \begin{align*}
        \int_\Omega \frac{|\nabla w|^{2p}}{w^p} \leq C_1 \int_\Omega |\Delta w|^p +C_2\int_\Omega w^p, \qquad \text{for all } t \in (0,T_{\rm max}).
    \end{align*}
    for any positive function $w \in C^2(\bar{\Omega})$ satisfying $\frac{\partial w}{\partial \nu}=0$ on $\partial \Omega$.
\end{lemma}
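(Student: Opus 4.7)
The plan is to extract an identity for $I:=\int_\Omega \frac{|\nabla w|^{2p}}{w^p}$ by integrating by parts in order to expose the Laplacian of $w$, and then absorb the residual Hessian contribution via standard Calder\'on--Zygmund regularity for the Neumann Laplacian. The hypothesis $\frac{\partial w}{\partial \nu} = 0$ on $\partial \Omega$ is exactly what is needed to discard the boundary terms generated by each integration by parts.

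First I would test $\Delta w = \nabla \cdot \nabla w$ against the factor $\frac{|\nabla w|^{2p-2}}{w^{p-1}}$. Using $\nabla |\nabla w|^{2p-2} = (2p-2)|\nabla w|^{2p-4} D^2 w \, \nabla w$, integrating by parts, and rearranging yields
\begin{equation*}
    (p-1)\, I \;=\; \int_\Omega \frac{|\nabla w|^{2p-2}}{w^{p-1}}\, \Delta w \;+\; (2p-2) \int_\Omega \frac{|\nabla w|^{2p-4}\, (D^2 w\, \nabla w) \cdot \nabla w}{w^{p-1}}.
\end{equation*}
When $1 < p < 2$ the factor $|\nabla w|^{2p-4}$ is singular at critical points of $w$; this is a routine nuisance that is handled by replacing $|\nabla w|^2$ with $|\nabla w|^2 + \delta$, carrying through the computation, and sending $\delta \to 0^+$.

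Next I would estimate the two right-hand side terms by Young's inequality. The key observation is the algebraic identity $\frac{|\nabla w|^{2p-2}}{w^{p-1}} = \bigl(\frac{|\nabla w|^{2p}}{w^p}\bigr)^{(p-1)/p}$, which coupled with the pointwise bound $|(D^2 w\, \nabla w) \cdot \nabla w| \le |D^2 w|\, |\nabla w|^2$ and Young's inequality with conjugate exponents $p$ and $p/(p-1)$ produces
\begin{equation*}
    (p-1)\, I \;\le\; (2p-1)\, \varepsilon\, I \;+\; C_\varepsilon \int_\Omega |\Delta w|^p \;+\; C_\varepsilon \int_\Omega |D^2 w|^p
\end{equation*}
for every $\varepsilon > 0$. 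Fixing $\varepsilon$ small enough that $(2p-1)\varepsilon < (p-1)/2$ allows absorption of the first term into the left, leaving $I \le C \bigl( \int_\Omega |\Delta w|^p + \int_\Omega |D^2 w|^p \bigr)$.

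Finally, to remove the Hessian norm I would invoke classical $L^p$-regularity for the Neumann Laplacian: writing $f := -\Delta w + w$ so that $w$ solves $-\Delta w + w = f$ in $\Omega$ with $\frac{\partial w}{\partial \nu} = 0$ on $\partial \Omega$, Calder\'on--Zygmund theory gives $\|w\|_{W^{2,p}(\Omega)} \le C \|f\|_{L^p(\Omega)}$, whence $\int_\Omega |D^2 w|^p \le C \bigl( \int_\Omega |\Delta w|^p + \int_\Omega w^p \bigr)$. Combining with the previous display completes the argument. The main obstacle is essentially bookkeeping: verifying the integration-by-parts identity carefully (with the regularization if $p<2$) and choosing $\varepsilon$ to close the absorption; the Neumann $W^{2,p}$ bound itself is classical.
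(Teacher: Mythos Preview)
Your proposal is correct and follows essentially the same route as the paper: integrate by parts to produce an identity for $(p-1)I$ involving $\int \frac{|\nabla w|^{2p-2}}{w^{p-1}}\Delta w$ and a Hessian term, absorb both via Young's inequality, and then control $\int_\Omega |D^2 w|^p$ by $\int_\Omega |\Delta w|^p + \int_\Omega w^p$ through Neumann Calder\'on--Zygmund regularity. Your remark about regularizing $|\nabla w|^2$ when $1<p<2$ is a sensible addition that the paper glosses over.
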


\begin{proof}
    Integrating by parts implies that 
    \begin{align*}
       \int_\Omega \frac{|\nabla w|^{2p}}{w^p} &= - \int_\Omega \nabla \left ( \frac{|\nabla w|^{2p-2} \nabla w}{w^p} \right )w \notag \\
       &= - \int_\Omega |\nabla w|^{2(p-2)} \frac{\nabla |\nabla w|^2 \cdot \nabla w}{w^{p-1}} - \int_\Omega \frac{|\nabla w|^{2p-2} \Delta w}{w^{p-1}} + p \int_\Omega \frac{|\nabla w|^{2p}}{w^p}.
    \end{align*}
    Applying Young's inequality deduces that 
    \begin{align} \label{M.1}
         - \int_\Omega \frac{|\nabla w|^{2p-2} \Delta w}{w^{p-1}} \leq \frac{p-1}{2}\int_\Omega \frac{|\nabla w|^{2p}}{w^p} + c_1 \int_\Omega |\Delta w|^p,
    \end{align}
    where $c_1=c_1(p) >0$.  Therefore, we obtain that 
    \begin{align} \label{M.1'}
       \frac{p-1}{2} \int_\Omega \frac{|\nabla w|^{2p}}{w^p} &\leq  \int_\Omega |\nabla w|^{2(p-2)} \frac{\nabla |\nabla w|^2 \cdot \nabla w}{w^{p-1}} +c_1\int_\Omega |\Delta w|^p.
    \end{align}
    Thanks to the point-wise identity that 
    \begin{equation*}
        \nabla |\nabla w|^2 \cdot \nabla w = 2 \nabla w \cdot D^2 w \cdot \nabla w
    \end{equation*}
   and Young's inequality, we have 
    \begin{align} \label{M.2}
       \int_\Omega |\nabla w|^{2(p-2)} \frac{\nabla |\nabla w|^2 \cdot \nabla w}{w^{p-1}}  &= 2\int_\Omega |\nabla w|^{2(p-2)} \frac{\nabla w \cdot D^2 w \cdot \nabla w}{w^{p-1}} \notag \\
       &\leq 2 \int_\Omega \frac{|\nabla w|^{2(p-1)}}{w^{p-1}} |D^2 w| \notag \\
        &\leq \frac{p-1}{4} \int_\Omega \frac{|\nabla w|^{2p}}{w^p} + c_2 \int_\Omega |D^2 w|^p,
    \end{align}
    where $c_2=c_2(p)>0$.
 By elliptic regularity theory (see \cite{Elliptic}[Theorem 9.9] or \cite{zh}[Lemma 2.5]), it follows that
\begin{equation} \label{M.3}
     \int_\Omega |D^2 w|^p \leq c_3 \int_\Omega |\Delta w - w|^p \leq c_4 \int_\Omega |\Delta w|^p+ c_4 \int_\Omega w^p,
\end{equation}
for some $c_3>0$ and $c_4>0$.
From  \eqref{M.1'}, \eqref{M.2}, and \eqref{M.3}, we obtain that
 \begin{align*}
 \frac{p-1}{4}   \int_\Omega \frac{|\nabla w|^{2p}}{w^p}  &\leq \left ( c_2c_4 +c_1 \right ) \int_\Omega |\Delta w  |^{p} + c_2c_4 \int_\Omega w^p,
 \end{align*}
 which completes the proof.
\end{proof}

\section{A Priori Estimates}\label{s3}
In this section, we will establish some important estimates for solutions to prepare for the proof of the main result. Let us begin with an $L^1$ bound for $u$ as follows:
\begin{lemma} \label{L1-est}
    There exist $m>0$ such that
    \begin{align*}
        \int_\Omega u(\cdot,t) + \int_\Omega v(\cdot,t ) \leq m, \qquad \text{for all }t \in (0, T_{\rm max}),
    \end{align*}
    where $\tau = \min \left \{ \frac{T_{\rm max}}{2},1 \right \}$.
\end{lemma}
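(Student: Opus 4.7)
The plan is to integrate each equation of \eqref{1} over $\Omega$ and exploit the logistic absorption to close an ODE argument. No delicate analysis of the chemotactic cross-diffusion is needed at this stage: both divergence terms drop out under Neumann boundary data.

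First I would integrate the $u$-equation over $\Omega$. The Laplacian and the chemotactic flux $\chi\nabla\cdot(uv^{-k}\nabla v)$ vanish upon integration by \eqref{bdry}, leaving
\begin{equation*}
    \frac{d}{dt}\int_\Omega u \;=\; r\int_\Omega u - \mu\int_\Omega u^2.
\end{equation*}
By the Cauchy--Schwarz inequality, $\int_\Omega u^2 \geq |\Omega|^{-1}(\int_\Omega u)^2$, so setting $y(t):=\int_\Omega u(\cdot,t)$ I obtain the logistic differential inequality
\begin{equation*}
    y'(t) \;\leq\; r\, y(t) - \frac{\mu}{|\Omega|}\, y(t)^2, \qquad t\in(0,T_{\mathrm{max}}).
\end{equation*}
A standard ODE comparison then yields $y(t)\leq m_1:=\max\bigl\{\int_\Omega u_0,\; r|\Omega|/\mu\bigr\}$ for all $t\in(0,T_{\mathrm{max}})$.

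Next I would integrate the $v$-equation, again using the Neumann condition, to get
\begin{equation*}
    \frac{d}{dt}\int_\Omega v \;=\; -\alpha\int_\Omega v + \beta\int_\Omega u \;\leq\; -\alpha\int_\Omega v + \beta m_1.
\end{equation*}
Grönwall's lemma (or another elementary comparison) then gives $\int_\Omega v(\cdot,t)\leq \max\{\int_\Omega v_0,\; \beta m_1/\alpha\}=:m_2$, and the conclusion follows with $m:=m_1+m_2$.

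I do not expect any genuine obstacle here: the logistic damping $-\mu u^2$ enforces the $L^1$ bound on $u$ independently of the chemotactic coupling, and the linear structure of the $v$-equation passes the bound onto $v$ directly. The parameter $\tau$ in the statement is not needed for this proof; presumably it will enter later estimates where parabolic regularity must be launched away from the initial time, but at the $L^1$ level the estimate holds on the entire interval $(0,T_{\mathrm{max}})$.
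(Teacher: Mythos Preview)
Your proposal is correct and follows essentially the same argument as the paper: integrate each equation over $\Omega$, use Cauchy--Schwarz (the paper says H\"older) to close a logistic ODE inequality for $\int_\Omega u$, then feed the resulting bound into the linear ODE for $\int_\Omega v$ and apply Gr\"onwall. Your remark about $\tau$ being unused is also accurate; it does not appear in the paper's proof either.
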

\begin{proof}
     Integrating the first equation of \eqref{1} over $\Omega$ and applying Holder's inequality yields that
    \begin{align*} 
        \frac{d}{dt} \int_\Omega  u(\cdot,t)&= r  \int_\Omega  u(\cdot,t) - \mu  \int_\Omega  u^2(\cdot,t) \notag \\
        &\leq r  \int_\Omega  u(\cdot,t) - \frac{\mu}{|\Omega|} \left ( \int_\Omega  u(\cdot,t) \right )^2,
    \end{align*}
    for all $t \in (0,T_{\rm max})$. From the comparison principle, it follows that
    \begin{align}\label{L1-est.1}
        \int_\Omega  u(\cdot,t) \leq c_1:= \max \left \{ \frac{r|\Omega|}{\mu}, \int_\Omega u_0. \right \},
    \end{align}
     for all $t \in (0,T_{\rm max})$. Integrating the second equation of \eqref{1} over $\Omega$ and applying Lemma \ref{L1-est} yields
    \begin{align*}
        \frac{d}{dt}\int_\Omega v + \alpha \int_\Omega v &= \beta \int_\Omega u \leq \beta c_1.
    \end{align*}
   =Therefore, applying Gronwall's inequality to this implies that 
    \begin{align}\label{L1-est.2}
        \sup_{t\in (0,T_{\rm max})} \int_\Omega v(\cdot,t) \leq \max \left \{ \int_\Omega v_0, \frac{\beta c_1}{\alpha} \right \}.
    \end{align}
    From \eqref{L1-est.1} and \eqref{L1-est.2}, the proof is now complete.
\end{proof}

\begin{lemma} \label{v-Lp}
    For any $\lambda>0$ there exists $C=C(\lambda)>0$ such that 
    \begin{align}
        \frac{d}{dt}\int_\Omega v^p + \lambda \int_\Omega v^p \leq \beta \int_\Omega u^p+C,
    \end{align}
    for any $p>1$.
\end{lemma}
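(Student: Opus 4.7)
The plan is to test the second equation of \eqref{1} against $p v^{p-1}$ and integrate by parts using the homogeneous Neumann boundary condition, which produces the identity
\[
\frac{d}{dt}\int_\Omega v^p + \frac{4(p-1)}{p}\int_\Omega |\nabla v^{p/2}|^2 + p\alpha \int_\Omega v^p = p\beta \int_\Omega u v^{p-1}.
\]
The specific coefficient $\beta$ that the conclusion requires in front of $\int_\Omega u^p$ comes out cleanly from Young's inequality in the sharp form $p u v^{p-1} \leq u^p + (p-1) v^p$; applying it to the right-hand side and rearranging gives
\[
\frac{d}{dt}\int_\Omega v^p + \frac{4(p-1)}{p}\int_\Omega |\nabla v^{p/2}|^2 + \bigl(p\alpha - (p-1)\beta\bigr)\int_\Omega v^p \leq \beta \int_\Omega u^p.
\]

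To upgrade the coefficient $p\alpha - (p-1)\beta$, which need not be positive (let alone as large as a prescribed $\lambda$), I would next exploit the uniform $L^1$ bound on $v$ furnished by Lemma \ref{L1-est}. Setting $w := v^{p/2}$, one has $\|w\|_{L^{2/p}(\Omega)}^{2/p} = \|v\|_{L^1(\Omega)} \leq m$, and the Gagliardo--Nirenberg inequality yields $\|w\|_{L^2}^2 \leq C\|\nabla w\|_{L^2}^{2\theta}\|w\|_{L^{2/p}}^{2(1-\theta)} + C\|w\|_{L^{2/p}}^2$ with an exponent $\theta \in (0,1)$ whose admissibility follows from an elementary scaling check valid for every $p > 1$ and $n \geq 2$. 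A subsequent Young's inequality absorbs the $\|\nabla w\|_{L^2}^{2\theta}$ factor, delivering the interpolation
\[
\int_\Omega v^p \leq \eta \int_\Omega |\nabla v^{p/2}|^2 + C_\eta, \qquad \eta > 0 \text{ arbitrary},
\]
where $C_\eta$ depends only on $\eta$, $p$, $n$, $\Omega$ and $m$.

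Given $\lambda > 0$, I would then choose $\eta$ small enough that $\tfrac{4(p-1)}{p\eta} + p\alpha - (p-1)\beta \geq \lambda$, and substitute the equivalent lower bound $\tfrac{4(p-1)}{p}\int_\Omega |\nabla v^{p/2}|^2 \geq \tfrac{4(p-1)}{p\eta}\int_\Omega v^p - \tfrac{4(p-1)}{p\eta} C_\eta$ into the differential inequality above; everything collects to precisely the claimed estimate, with $C := \tfrac{4(p-1)}{p\eta}C_\eta$. The only mildly delicate point is confirming that the Gagliardo--Nirenberg exponent is admissible uniformly in $p > 1$; beyond that, I do not anticipate any real obstacle, as this is a standard $L^p$-testing argument for a heat equation with an $L^p$ source and an $L^1$-controlled drift.
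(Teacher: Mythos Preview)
Your proposal is correct and follows essentially the same route as the paper: test the $v$-equation with $pv^{p-1}$, apply the sharp Young inequality $p u v^{p-1}\le u^p+(p-1)v^p$, and then use the Gagliardo--Nirenberg inequality together with the $L^1$ bound from Lemma~\ref{L1-est} to absorb the residual $\int_\Omega v^p$ into the gradient term. The paper even records the same interpolation exponent $\theta=\tfrac{np-n}{np-n+2}$ that your scaling check produces, so there is no substantive difference.
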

\begin{proof}
    Multiplying the second equation of \eqref{1} by $pv^{p-1}$ and integrating by parts yields
    \begin{align*}
        \frac{d}{dt} \int_\Omega v^p &= -\frac{4(p-1)}{p} \int_\Omega |\nabla v^{\frac{p}{2}}|^2 - \alpha p \int_\Omega v^p +\beta p \int_\Omega u v^{p-1}.
    \end{align*}
   This, together with Young's inequality implies that 
   \begin{align} \label{v-Lp.1}
         \frac{d}{dt} \int_\Omega v^p + \lambda \int_\Omega v^p \leq  -\frac{4(p-1)}{p} \int_\Omega |\nabla v^{\frac{p}{2}}|^2 +c_1 \int_\Omega v^p +\beta \int_\Omega u^p
   \end{align}
   where $c_1= (\lambda -\alpha p)_+ +\beta(p-1) $. Applying Gagliardo–Nirenberg interpolation inequality, Young's inequality and Lemma \ref{L1-est} deduces that 
   \begin{align*}
       c_1 \int_\Omega v^p &\leq c_2 \left ( \int_\Omega |\nabla v^\frac{p}{2}|^2 \right )^{\frac{np-n}{np-n+2}} \left ( \int_\Omega v \right )^{\frac{2p}{np+2-n}} +c_2 \left ( \int_\Omega v \right )^p \notag \\
       &\leq  -\frac{4(p-1)}{p} \int_\Omega |\nabla v^{\frac{p}{2}}|^2  +c_3,
   \end{align*}
   for some $c_2>0$ and $c_3>0$. This, together with \eqref{v-Lp.1} proves the desired result.
\end{proof}

We can now establish the key estimate for solution of \eqref{1} as in the following lemma.
 \begin{lemma} \label{Lp}
     Let $p >2$ and $1\leq q<p-1$. There exists $\mu_0=\mu_0(p,q,n,k,\chi,\beta)>0$ such that for any $\mu>\mu_0$, the following holds
     \begin{align*}
         \sup_{t \in (0,T_{\rm max})} \left \{ \int_\Omega u^p +u^pv^{-q} +\int_\Omega v^{p+1} \right \} < \infty.
     \end{align*}
 \end{lemma}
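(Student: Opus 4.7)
The strategy is to derive a differential inequality for the energy functional
\[
y(t) = \int_\Omega u^p + \int_\Omega u^p v^{-q} + \int_\Omega v^{p+1},
\]
which, once $\mu$ is chosen large enough, closes via a Grönwall-type argument. I would differentiate each summand separately. Testing the first equation of \eqref{1} with $pu^{p-1}$ and then with $pu^{p-1}v^{-q}$, and handling the $v^{-q}$ factor via the product rule (so that the second equation enters), produces after integration by parts the good dissipative terms $p(p-1)\int u^{p-2}|\nabla u|^2(1+v^{-q})$, the logistic dissipation $p\mu\int u^{p+1}(1+v^{-q})$, the crucial additional negative contribution $-\beta q\int u^{p+1}v^{-q-1}$ arising from the source $\beta u$ in the $v$-equation, and bad terms of the schematic form $\int u^p v^{-k}\Delta v$, $\int u^p v^{-q-1}\Delta v$, $\int u^p v^{-k-1}|\nabla v|^2$, $\int u^p v^{-q-2}|\nabla v|^2$, together with mixed $\nabla u\cdot\nabla v$ contributions. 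Lemma \ref{v-Lp} applied with exponent $p+1$ handles $\frac{d}{dt}\int v^{p+1}$ and contributes a further $\beta\int u^{p+1}$ on the right-hand side.

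The heart of the argument is to control these bad terms. Each $\int u^a v^{-b}|\nabla v|^2$ piece I would split via Young's inequality with exponents chosen so that the $u$-factor becomes $u^{p+1}$ paired with $v^{-q-1}$ (absorbable, for small prefactor, by $-\beta q\int u^{p+1}v^{-q-1}$) while the residual is a pure $\int |\nabla v|^{2p}v^{-p}$, which Lemma \ref{LK-1} then converts to $\int |\Delta v|^p+\int v^p$. The mixed $\nabla u\cdot\nabla v$ terms are first absorbed into the $\int u^{p-2}|\nabla u|^2 v^{-q}$ dissipation via another Young step, leaving extra $|\nabla v|^2$-pieces treated as above. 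The $\Delta v$-bad terms are Young-split analogously into a $u^{p+1}v^{-q-1}$ piece and a pure $\int |\Delta v|^p$ piece. Collecting everything, one obtains an inequality schematically of the form
\[
y'(t) + \eta\, y(t) \le (C_1 - c_1\mu)\int_\Omega u^{p+1} + C_2\int_\Omega|\Delta v|^p + C_3,
\]
with $\eta>0$ and constants depending only on $p,q,n,k,\chi,\beta$; choosing $\mu$ so large that $C_1 - c_1\mu\le 0$ eliminates the $u^{p+1}$-term.

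To absorb the residual $\int|\Delta v|^p$ I would multiply the inequality by $e^{pt/2}$, integrate in time, and invoke Lemma \ref{l1} applied to $v$ (with $f = \beta u + (1-\alpha)v$, after rewriting $v_t = \Delta v - v + f$) to bound $\int_{t_0}^t e^{ps/2}\int|\Delta v|^p$ by $\int_{t_0}^t e^{ps/2}\int u^p$ plus data. Young's inequality $\int u^p \le \epsilon\int u^{p+1} + C_\epsilon$ reduces this once more to a $u^{p+1}$-term that the logistic damping swallows, and the weighted Grönwall argument then yields $y(t)\le C$ uniformly on $(0,T_{\rm max})$. The main obstacle I foresee is combinatorial: each Young split must simultaneously deliver the exponent pair $(u^{p+1},v^{-q-1})$ demanded by the absorbing term and the exact form $|\nabla v|^{2p}/v^p$ demanded by Lemma \ref{LK-1}. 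The feasibility of these splits is precisely what forces $p>2$ and $1\le q<p-1$ (so that in particular $q+1<p$ and the target $v$-exponents remain in the admissible range), and also what determines the implicit size of $\mu_0$.
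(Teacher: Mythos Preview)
Your plan captures the overall architecture---the energy functional, the use of Lemmas \ref{LK-1}, \ref{l1}, and \ref{v-Lp}, and the final Gr\"onwall step---but it misses the structural role of one of the two sign-favorable terms, and as a consequence the Young splits you describe cannot actually be carried out. When you compute $\frac{d}{dt}\int_\Omega u^p v^{-q}$ and integrate by parts the contribution $-q\int_\Omega u^p v^{-q-1}\Delta v$ coming from the second equation, you produce (besides a mixed $\nabla u\cdot\nabla v$ term) the quantity $-q(q+1)\int_\Omega u^p v^{-q-2}|\nabla v|^2$ with a \emph{negative} sign. In the paper's argument this is not a bad term to be estimated---it is the absorber for the worst gradient residuals. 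Indeed, after pushing the mixed term $2pq\int_\Omega u^{p-1}v^{-q-1}\nabla u\cdot\nabla v$ into the diffusion $\int_\Omega u^{p-2}v^{-q}|\nabla u|^2$ via Young (exactly as you propose), the leftover is $+\tfrac{p^2q^2}{p(p-1)-\varepsilon}\int_\Omega u^p v^{-q-2}|\nabla v|^2$; this can only be killed by the $-q(q+1)$ term, and the inequality $\tfrac{p^2q^2}{p(p-1)}<q(q+1)$ is precisely the hypothesis $q<p-1$. You single out $-\beta q\int_\Omega u^{p+1}v^{-q-1}$ as the crucial absorber, but it cannot swallow $\int_\Omega u^p v^{-q-2}|\nabla v|^2$: any Young split that raises the $u$-factor to $u^{p+1}$ forces the residual to be $|\nabla v|^{2(p+1)}/v^{p+q+2}$, which is not of the form $|\nabla v|^{2\ell}/v^{\ell}$ required by Lemma \ref{LK-1}. (Concretely, with $X=u^p v^{-\theta}$, $Y=v^{-(q+2-\theta)}|\nabla v|^2$ and exponents $\tfrac{p+1}{p},\,p+1$, matching the residual $v$-power to $p+1$ forces $\theta=q+1$, and then the first piece is $u^{p+1}v^{-(q+1)(p+1)/p}$ with $v$-exponent strictly exceeding $q+1$.)

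What actually happens in the paper is that after the $-q(q+1)$ cancellation the only surviving $|\nabla v|^2$-contribution carries the mild weight $v^{-1}$ (the intermediate weights $v^{-q-2k}$ and $v^{-1-k}$ are Young-interpolated between $v^{-q-2}$ and $v^{-1}$), and $\int_\Omega u^p v^{-1}|\nabla v|^2$ \emph{does} split cleanly via Young into $\int_\Omega u^{p+1}+\int_\Omega |\nabla v|^{2(p+1)}/v^{p+1}$, to which Lemma \ref{LK-1} with exponent $p+1$ applies. This is also why the $\Delta v$-power and the time weight in Lemma \ref{l1} should be $p+1$ rather than the $p$ you wrote. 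Once you restore the good term $-q(q+1)\int_\Omega u^p v^{-q-2}|\nabla v|^2$ to its absorbing role, the rest of your outline matches the paper's proof.
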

    \begin{proof}
        By integrating by parts and using \eqref{1}, we obtain that
    \begin{align} \label{Lp.1}
        \frac{d}{dt} \int_\Omega u^p v^{-q} &= p \int_\Omega u^{p-1}v^{-q}u_t -q \int_\Omega u^p v^{-q-1}v_t \notag \\
        &= p \int_\Omega u^{p-1}v^{-q} \left ( \Delta u -\chi \nabla \cdot \left ( uv^{-k} \nabla v \right ) +ru -\mu u^2 \right ) \notag \\
        &\quad -q \int_\Omega u^p v^{-q-1} \left ( \Delta v- \alpha v +\beta u \right ) \notag \\
        &=- p(p-1)\int_\Omega u^{p-2}v^{-q}|\nabla u|^2 +2pq \int_\Omega u^{p-1}v^{-q-1} \nabla u \cdot \nabla v  \notag\\
        &\quad + p(p-1)\chi \int_\Omega u^{p-1}v^{-q-k} \nabla u \cdot \nabla v -pq\chi \int_\Omega u^pv^{-q-k-1}|\nabla v|^2   + (rp+q\alpha) \int_\Omega u^p v^{-q}\notag \\
        &\quad -\mu p \int_\Omega u^{p+1}v^{-q} -q(q+1)\int_\Omega u^p v^{-q-2}|\nabla v|^2- q\beta \int_\Omega u^{p+1}v^{-q-1},
        \end{align}
        and 
        \begin{align}\label{Lp.2}
            \frac{d}{dt} \int_\Omega u^p &= -\frac{4(p-1)}{p}\int_\Omega |\nabla u^\frac{p}{2}|^2 + \chi k (p-1)\int_\Omega u^p v^{-1-k}|\nabla v|^2 -\chi(p-1)\int_\Omega u^pv^{-k} \Delta v \notag \\
            &\quad + rp \int_\Omega u^p -\mu p \int_\Omega u^{p+1}.
        \end{align}
    Employing Young's inequalities yields
    \begin{align}\label{Lp.3}
         2pq \int_\Omega u^{p-1}v^{-q-1} \nabla u \cdot \nabla v \leq (p(p-1)-\varepsilon_1) \int_\Omega u^{p-2}v^{-q} |\nabla u|^2+ \frac{p^2q^2}{p(p-1)-\varepsilon_1} \int_\Omega u^p v^{-q-2}|\nabla v|^2,
    \end{align}
    and 
    \begin{align}\label{Lp.4}
        p(p-1)\chi \int_\Omega u^{p-1}v^{-q-k} \nabla u \cdot \nabla v &\leq \varepsilon_1 \int_\Omega u^{p-2} v^{-q}|\nabla u|^2 + \frac{p^2(p-1)^2\chi^2}{4\varepsilon_1}\int_\Omega u^p v^{-q-2k}|\nabla v|^2 \notag \\
        &\leq \varepsilon_1 \int_\Omega u^{p-2} v^{-q}|\nabla u|^2 + \varepsilon_2 \int_\Omega u^p v^{-q-2}|\nabla v|^2 +c_1 \int_\Omega u^pv^{-1}|\nabla v|^2,
    \end{align}
    where $c_1=c_1(\epsilon_1,\epsilon_2, \chi,k,p)>0$ and the last inequality holds due to \(q+2> q+2k>1\). Similarly, we also have
    \begin{align}\label{Lp.5}
        \chi k (p-1)\int_\Omega u^p v^{-1-k}|\nabla v|^2 \leq \epsilon_2 \int_\Omega u^p v^{-q-2}|\nabla v|^2 +c_2 \int_\Omega u^p v^{-1}|\nabla v|^2
    \end{align}
     where $c_2=c_2(\epsilon_2, \chi,k,p)>0$, and 
     \begin{align}\label{Lp.6}
         -\chi(p-1)\int_\Omega u^pv^{-k} \Delta v &\leq  \chi(p-1) \int_\Omega \frac{u^{p+1}}{v^{\frac{(p+1)k}{p}}} + \chi \int_\Omega |\Delta v|^{p+1} \notag\\
         &\leq \frac{q\beta }{2} \int_\Omega u^{p+1}v^{-q-1} + c_3 \int_\Omega u^{p+1} + \chi \int_\Omega |\Delta v|^{p+1}, 
     \end{align}
     where $c_3=c_3(p,q,\beta,\chi,k)$ and the last inequality holds since $0<\frac{(p+1)k}{p}< q+1$. Using Young's inequality again entails that 
    \begin{align}\label{Lp.7}
        (\frac{p+1}{2}+rp+q\alpha) \int_\Omega u^p v^{-q} &\leq \frac{\beta q}{2} \int_\Omega u^{p+1}v^{-q-1}+c_4 \int_\Omega v^{p-q} \notag \\
         &\leq \frac{\beta q}{2} \int_\Omega u^{p+1}v^{-q-1}+\int_\Omega v^{p+1}+c_5.
    \end{align}
    for some $c_4>0$ and $c_5>0$. It is akin to the above estimates that
    \begin{align}\label{Lp.8}
        (rp+\frac{p+1}{2}) \int_\Omega u^p \leq \frac{\mu p}{2} \int_\Omega u^{p+1} + c_6,
    \end{align}
    where $c_6>0$. Setting $y(t)= \int_\Omega u^p + \int_\Omega u^p v^{-q}$ and collecting from \eqref{Lp.1} to \eqref{Lp.8} entails that 
    \begin{align*}
        y'(t)+\frac{p+1}{2}y(t) &\leq \left ( 2 \epsilon_2 + \frac{p^2q^2}{p(p-1)-\epsilon_1} -q(q+1) \right ) \int_\Omega u^p v^{-q-2}|\nabla v|^2 +c_7 \int_\Omega u^p v^{-1}|\nabla v|^2 \notag \\
        &\quad+\left ( c_3-\frac{\mu p}{2} \right )\int_\Omega u^{p+1} + \int_\Omega v^{p+1}+ \chi \int_\Omega |\Delta v|^{p+1} +c_5+c_6,
    \end{align*}
     where $c_7=c_1+c_2$ independent of $\mu$. Choosing $\epsilon_1 = \frac{p(p-q-1)}{2(q+1)}$ and $\epsilon_2 = \frac{q(q+1)}{2}- \frac{p^2q^2}{2(p(p-1)- \epsilon_1)}$ leads to 
   \[
    2 \epsilon_2 + \frac{p^2q^2}{p(p-1)-\epsilon_1} -q(q+1) \leq 0,
   \]
   due to $q+1<p$. Therefore, we obtain that
   \begin{align}\label{Lp.9}
           y'(t)+\frac{p+1}{2}y(t) \leq c_7 \int_\Omega u^p v^{-1}|\nabla v|^2 +\left ( c_3-\frac{\mu p}{2} \right )\int_\Omega u^{p+1} + \int_\Omega v^{p+1}+ \chi \int_\Omega |\Delta v|^{p+1}+c_5+c_6.
   \end{align}
   Applying Young's inequality and Lemma \ref{LK-1} implies that 
   \begin{align}\label{Lp.9'}
        c_7 \int_\Omega u^p v^{-1}|\nabla v|^2 &\leq c_7 \int_\Omega u^{p+1} + c_7 \int_\Omega \frac{|\nabla v|^{2p+2}}{v^{p+1}} \notag \\
        &\leq c_7 \int_\Omega u^{p+1} +c_8 \int_\Omega |\Delta v|^{p+1} +c_9 \int_\Omega v^{p+1},
   \end{align}
   where $c_8=c_8(p,q,k,n,\chi)>0$ and $c_9>0$. Using Lemma \ref{v-Lp} with $\lambda=  \frac{p+1}{2} +c_9+1$ entails that 
   \begin{align} \label{Lp.9''}
    \frac{d}{dt} \int_\Omega v^{p+1} + \left ( \frac{p+1}{2} +c_9 +1\right ) \int_\Omega v^{p+1} \leq \beta  \int_\Omega u^{p+1}  +c_{10}
   \end{align}
   where $c_{10}>0$. Denoting $h(t)= y(t) +\int_\Omega v^{p+1}(\cdot,t)$ and combining \eqref{Lp.9}, \eqref{Lp.9'}, and \eqref{Lp.9''} deduces that 
   \begin{align}
       h'(t)+\frac{p+1}{2} h(t) &\leq \left ( c_3+c_7+\beta -\frac{\mu p}{2} \right )\int_\Omega u^{p+1} + c_9 \int_\Omega |\Delta v|^{p+1}+c_{11},
   \end{align}
   where $c_9= c_8+\chi$ and $c_{11}=c_5+c_6+c_{10}$. Multiplying this by $e^{\frac{p+1}{2}t}$, integrating over $(t_0,t)$ where $t_0 = \min \left\{ 1, \frac{T_{\rm max}}{2}\right \}$ and applying Lemma \ref{l1} implies that
   \begin{align*}
      e^{\frac{p+1}{2}t} h(t )  - e^{\frac{p+1}{2}t_0} h(t_0 )&\leq \left ( c_3+c_7+\beta -\frac{\mu p}{2} \right ) \int_{t_0}^t e^{\frac{p+1}{2}s} \int_\Omega u^{p+1}(\cdot,s)\, ds +c_{9} \int_{t_0}^t e^{\frac{p+1}{2}s} \int_\Omega |\Delta v(\cdot,s)|^{p+1} \notag \\
      &\quad + \frac{2c_{11}}{p+1} \left ( e^{\frac{p+1}{2}t}-  e^{\frac{p+1}{2}t_0} \right )   \notag \\
      &\leq \left (c_{12} - \frac{\mu p}{2} \right )\int_{t_0}^t e^{\frac{p+1}{2}s} \int_\Omega u^{p+1}(\cdot,s)\, ds +c_{13} e^{\frac{p+1}{2}t_0} \int_\Omega |\Delta v(\cdot,t_0)|^{p+1} \notag \\
       &\quad + \frac{2c_{11}}{p+1} \left ( e^{\frac{p+1}{2}t}-  e^{\frac{p+1}{2}t_0} \right )  
   \end{align*}
   where $c_{12}=c_{12}(p,q,k,n,\chi, \beta )>0$ and $c_{13}>0$.
  By choosing $\mu_0=  \frac{2c_{12}}{p}$, then for any $\mu >\mu_0$ we obtain
   \begin{align}\label{Lp.10}
       h(t) \leq h(t_0) +c_{13} e^{\frac{p+1}{2}t_0} \int_\Omega |\Delta v(\cdot,t_0)|^{p+1} + \frac{2c_{11}}{p+1} = c_{14}
   \end{align}
   for all $t \in (t_0, T_{\rm max})$. Since $u$, $v$ and $\frac{1}{v}$ is bounded in $\Omega \times (0,t_0)$ due to Lemma \ref{local-exist}, we have $\sup_{t\in (0,t_0]}h(t) =c_{15}< \infty$, which in conjunction with \eqref{Lp.10} entails that $\sup_{t \in (0,T_{\rm max})}h(t) \leq \max \left \{ c_{14},c_{15} \right \}$. The proof is now complete.
    \end{proof}

    As a direct consequence of the above lemma, we can now establish the following estimate for solutions, which will be used in the proof of Theorem \ref{thm1}.
    \begin{lemma} \label{uv}
        For any $p> \max \left \{ n, \frac{1}{1-k}, \frac{1}{k} \right \}$ there exists $\mu^*=\mu^*(p,n,k,\chi, \beta )>0$ such that for any $\mu>\mu^*$ the following holds 
        \begin{align*}
            \sup_{t \in (0,T_{\rm max})}\int_\Omega u^p(\cdot,t) \frac{|\nabla v(\cdot,t)|^p}{v^{kp}(\cdot,t)} <\infty. 
        \end{align*}
    \end{lemma}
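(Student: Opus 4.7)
The strategy is to split the target integral via Hölder into a pointwise factor coming from $|\nabla v|$ and an integral of $u^p v^{-kp}$, then control each factor separately using Lemma \ref{Lp} and the parabolic regularity in Lemma \ref{C52.Para-Reg}.

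First, I apply Lemma \ref{Lp} with the exponent $p$ from the current lemma and the choice $q = kp$. The restrictions $1 \leq q < p-1$ translate exactly into $kp \geq 1$ and $kp < p-1$, i.e., $p \geq 1/k$ and $p > 1/(1-k)$, both of which are guaranteed by the hypothesis $p > \max\{n, 1/(1-k), 1/k\}$. Setting $\mu^* := \mu_0(p, kp, n, k, \chi, \beta)$, Lemma \ref{Lp} yields, for every $\mu > \mu^*$,
\begin{equation*}
  \sup_{t \in (0,T_{\rm max})} \left\{ \int_\Omega u^p(\cdot,t) + \int_\Omega u^p(\cdot,t)\, v^{-kp}(\cdot,t) \right\} < \infty.
\end{equation*}

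Next, I use the $L^\infty((0,T_{\rm max});L^p(\Omega))$ bound on $u$ together with Lemma \ref{C52.Para-Reg} applied to the $v$-equation with $f = \beta u$ and $a = \alpha$. Because $p > n$, the lemma allows the exponent $\infty$ on the $W^{1,\cdot}$ side, and hence
\begin{equation*}
  \sup_{t \in (0, T_{\rm max})} \| \nabla v(\cdot,t) \|_{L^\infty(\Omega)} < \infty.
\end{equation*}

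Finally, combining the two estimates by pulling out the $L^\infty$ bound on $|\nabla v|$ gives
\begin{equation*}
  \int_\Omega u^p \frac{|\nabla v|^p}{v^{kp}} \leq \| \nabla v \|_{L^\infty(\Omega)}^{\,p} \int_\Omega u^p v^{-kp} \leq C,
\end{equation*}
uniformly in $t \in (0, T_{\rm max})$, which is the claim. There is no real obstacle here: all the analytic work was done in Lemma \ref{Lp}, and the role of this lemma is essentially bookkeeping, choosing $q = kp$ so that the singular weight $v^{-kp}$ matches the Lebesgue exponent of $|\nabla v|$ after factoring out the uniform gradient bound obtained from parabolic regularity.
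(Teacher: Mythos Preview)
Your proof is correct and follows essentially the same approach as the paper: apply Lemma~\ref{Lp} with $q=kp$ (noting that the hypothesis $p>\max\{n,1/(1-k),1/k\}$ ensures $1\le kp<p-1$), then use the resulting $L^p$ bound on $u$ together with Lemma~\ref{C52.Para-Reg} (with $p>n$) to obtain $\nabla v\in L^\infty$, and finally factor out $\|\nabla v\|_{L^\infty}^p$ from the target integral.
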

    \begin{proof}
       The condition $p> \max \left \{ n, \frac{1}{1-k}, \frac{1}{k} \right \}$ entails that $1\leq kp<p-1$. From Lemma \ref{Lp}, there exists $\mu^* = \mu^*(p,n,k,\chi, \beta )>0$ such that for any $\mu> \mu^*$, we obtain 
       \begin{align} \label{uv.1}
           \int_\Omega u^p + \int_\Omega \frac{u^p}{v^{kp}} \leq c_1, \quad \text{for all }t \in (0,T_{\rm max}),
       \end{align}
       where $c_1=c_1(p)>0$. Since $u \in L^\infty \left ( (0,T_{\rm max});L^p(\Omega) \right )$ for some $p>n$, Lemma \ref{C52.Para-Reg} asserts that 
       \begin{align} \label{uv.2}
           \left \| \nabla v(\cdot,t) \right \|_{L^\infty(\Omega)} \leq c_2, \quad \text{for all } t\in (0,T_{\rm max}),
       \end{align}
       where $c_2>0$. From \eqref{uv.1} and \eqref{uv.2}, we obtain that 
       \begin{align*}
           \int_\Omega u^p \frac{|\nabla v|^p}{v^{kp}} \leq c_2^pc_1, \quad \text{for all } t\in (0,T_{\rm max}),
       \end{align*}
       which completes the proof.
    \end{proof}
    \section{Proof of the main result} \label{s4}
  By applying standard $L^p$-$L^q$ estimates for the Neumann heat semigroup and following the argument in \cite{Winkler-logistic} [Proof of Theorem 0.1], Theorem \ref{thm1} is immediately proven. For the readers' convenience, we provide a detailed proof here.
    \begin{proof}[Proof of Theorem \ref{thm1}]
    We have 
    \begin{align} \label{pr.1}
         u(\cdot,t)=  e^{(t-t_0) \Delta }u(\cdot,t_0) + \int_{t_0} ^t e^{(t-s)\Delta} \nabla \cdot \left ( u\frac{\nabla v}{v^k} \right )(\cdot,s)\, ds +\int_{t_0}^t e^{(t-s)\Delta } (ru -\mu u^2) \, ds, 
    \end{align}
    where $t \in (0,T_{\rm max})$ and $t_0 = \max \left \{ t-1,0 \right \} $. Noting that $ru -\mu u^2 \leq c_1$, where $c_1= \frac{r^2}{4\mu}$, we obtain
        \begin{align} \label{pr.2}
            \int_{t_0}^t e^{(t-s)\Delta } (ru -\mu u^2) \, ds \leq c_1(t-t_0).
        \end{align}
    One can verify that 
    \begin{align} \label{pr.3}
        e^{(t-t_0) \Delta }u(\cdot,t_0) \leq \left \| e^{(t-t_0) \Delta }u(\cdot,t_0) \right \|_{L^\infty(\Omega) } \leq c_2 (1+(t-t_0)^{-\frac{n}{2}}),
    \end{align}
    for some $c_2>0$. Lemma \ref{uv} asserts that for $q= \max \left \{ n,\frac{1}{k}, \frac{1}{1-k} \right \}+1$, there exists $\mu_0=\mu_0(n,k,r,\chi,\beta)>0$ such that for any $\mu > \mu_0$, we obtain
    \begin{align*}
        \sup_{t\in (0,T_{\rm max})}\left \| u(\cdot,t) \frac{\nabla v(\cdot,t) }{v^{k}(\cdot,t)} \right \|_{L^q(\Omega)} <\infty.
    \end{align*}
    This, together with standard $L^p-L^q$ estimates for $(e^{t \Delta})_{t\geq 0}$ (see  \cite{Winkler-2010}[Lemma 1.3]) deduces that
    \begin{align} \label{pr.4}
        \int_{t_0} ^t e^{(t-s)\Delta} \nabla \cdot \left ( u\frac{\nabla v}{v^k} \right )(\cdot,s)\, ds  &\leq  \int_{t_0} ^t \left \| e^{(t-s)\Delta} \nabla \cdot \left ( u\frac{\nabla v}{v^k} \right )(\cdot,s) \right \|_{L^\infty(\Omega)} \, ds \notag \\
        &\leq  c_3\int_{t_0} ^t (t-s)^{-\frac{n}{2p}} \left \|  e^{\frac{t-s}{2} \Delta } \nabla \cdot \left (u(\cdot,s) \frac{\nabla v(\cdot,s) }{v^{k}(\cdot,s)} \right ) \right \|_{L^p(\Omega)} \, ds \notag\\
        &\leq c_4 \int_{t_0}^t (t-s)^{-\frac{n}{2p}} \cdot (t-s)^{-\frac{1}{2}-\frac{n}{2} \left ( \frac{1}{q}-\frac{1}{p} \right )} \left \| u(\cdot,s) \frac{\nabla v(\cdot,s) }{v^{k}(\cdot,s)} \right \|_{L^q(\Omega)} \, ds \notag \\
        &\leq c_5  \int_{t_0}^t (t-s)^{ -\frac{1}{2}-\frac{n}{2q}}  \, ds \notag \\
        &\leq c_5 (t-t_0)^{\frac{1}{2}-\frac{n}{2q}}
    \end{align}
    where $p>q$, and $c_3$, $c_4$, $c_5$ are positive constants. Combining \eqref{pr.1}, \eqref{pr.2}, \eqref{pr.3}, and \eqref{pr.4} yields
    \begin{align}
        \left \|u(\cdot,t) \right \|_{L^\infty(\Omega)} &\leq  c_1(t -t_0 ) + c_2 (1+(t-t_0)^{-\frac{n}{2}}) +c_5 (t-t_0)^{\frac{1}{2}-\frac{n}{2q}} \notag \\
        &\leq c_6 (t^{-\frac{n}{2}}+1), \quad \text{for all }t \in (0,T_{\rm max}),
    \end{align}
     which entails that  $u$ is bounded in $\Omega \times (0,T_{\rm max})$ since $u$ is bounded in $\Omega \times (0, \min \left \{1, \frac{T_{\rm max}}{2} \right \})$ due to Lemma \ref{local-exist}. By Lemma \ref{C52.Para-Reg}, it follows that $v \in L^\infty \left ((0,T_{\rm max}); W^{1,\infty}(\Omega) \right )$. Therefore, the extensibility property of the solutions as established in Lemma \eqref{local-exist-1} implies that $T_{\rm max} = \infty$ and that 
    \begin{align*}
        \sup_{t> 0} \left \{ \left \| u(\cdot,t) \right \|_{L^\infty(\Omega)} + \left \| v(\cdot,t) \right \|_{W^{1, \infty}(\Omega)} \right \} <\infty.
    \end{align*}
     
    \end{proof}

    \end{document}